\newtheorem{thm}{Theorem}[section]
\newtheorem{lem}[thm]{Lemma}
\newtheorem{cor}[thm]{Corollary}
\newtheorem{defn}[thm]{Definition}
\newcommand{\Rmnum}[1]{\expandafter\@slowromancap\romannumeral #1@}
\begin{document}

\title{The edge chromatic number of outer-1-planar graphs\thanks{Supported by the National Natural Science Foundation of China (No.\,11301410, 11201440, 11101243), the Natural Science Basic Research Plan in Shaanxi Province of China (No.\,2013JQ1002), the Specialized
Research Fund for the Doctoral Program of Higher Education (No.\,20130203120021), and the Fundamental Research Funds for the Central Universities (No.\,K5051370003, K5051370021).}}
\author{Xin Zhang\thanks{Email address: xzhang@xidian.edu.cn.}\\
{\small School of Mathematics and Statistics, Xidian University, Xi'an 710071, P.\,R.\,China}}

\maketitle

\begin{abstract}\baselineskip  0.65cm
A graph is outer-1-planar if it can be drawn in the
plane so that all vertices are on the outer face and each edge is crossed
at most once. In this paper, we completely determine the edge chromatic number of outer 1-planar graphs.\\[.2em]
Keywords: outer-1-planar graph, pseudo-outerplanar graph, edge coloring.
\end{abstract}

\baselineskip  0.65cm

\section{Introduction}

All graphs considered in this paper are simple and undirected. By $V(G),E(G),\Delta(G)$ and $\delta(G)$, we denote the set of vertices, the set of edges, the maximum degree and the minimum degree of a graph $G$, respectively. In any figure of this paper, the degree of a solid or hollow vertex is exactly or at least the number of edges that are incident with it, respectively. Moreover, solid vertices are distinct but two hollow vertices may be same unless we states.

A graph is \emph{outer-1-planar} if it can be drawn in the
plane so that all vertices are on the outer face and each edge is crossed at most once.  Outer-1-planar graphs were first introduced by Eggleton \cite{Eggleton} who called them \emph{outerplanar graphs with edge crossing number one}, and were also investigated under the notion of \emph{pseudo-outerplanar graphs} by Zhang, Liu and Wu \cite{ZPOPG}.
In fact, the notion of outer-1-planarity is a natural generation of the outer-planarity, and is also a combination of the 1-planarity and the outer-planarity.
From the definition of the outer-1-planarity, outer-1-planar graphs are a subfamily of planar graphs, which are one of the most studied areas in graph theory and an important class in graph drawing. It is now proved by Dehkordi and Eades \cite{DE} that every outer-1-planar graph has a right angle crossing drawing and by Auer \emph{et al.}\,\cite {ABBGHNR} that the recognition of outer-1-planarity can process in linear time. Outer-1-planar graphs are also used as a special graph family for verifying some interesting conjectures on graph colorings. For instance, it is proved that the list edge and the list total coloring conjectures hold for outer-1-planar graphs with maximum degree at least five \cite{TZ,LTC}, and the total coloring conjecture and the equitable $\Delta$-coloring conjectures hold for all outer-1-planar graphs \cite{TC,TZ}.

An \emph{edge $k$-coloring} of a graph $G$ is an assignment $f:E(G)\rightarrow \{1,2,\ldots,k\}$ so that $f(e_1)\neq f(e_2)$ whenever $e_1$ and $e_2$ are two adjacent edges. The minimum integer $k$ so that $G$ has an edge $k$-coloring, denoted by $\chi'(G)$, is the \emph{edge chromatic number} of $G$. The well-known Vizing's Theorem says that $\Delta(G)\leq \chi'(G)\leq \Delta(G)+1$ for every simple graph $G$. Therefore, to determine whether the edge chromatic number of a graph $G$ is $\Delta(G)$ or $\Delta(G)+1$ is interesting. However, the edge chromatic number problem is an NP-complete problem, and more badly, decide whether a given simple graph with maximum degree 3 has edge chromatic number 3 is also NP-complete \cite{Holyer}. As far as we know, the edge chromatic numbers of only few families of graphs have been fixed. For example, the edge chromatic numbers of 1-planar graphs with maximum degree at least 10 \cite{ZW}, planar graphs with maximum degree at least 7 \cite{SZ} and series-parallel graphs (thus also outerplanar graphs) with maximum degree at least 3  \cite{JMT} are the maximum degree.

The edge colorings of outer-1-planar graphs were first considered by Zhang, Liu and Wu \cite{ZPOPG}. They proved that the edge chromatic numbers of
outer-1-planar graphs with maximum degree at least 4 are the maximum degree and announced  that there are outer-1-planar graphs with maximum degree 3 and edge chromatic number 4. In this paper, we follow their work and determine the edge chromatic numbers of outer-1-planar graphs with maximum degree 3. Note that the edge chromatic numbers of graphs with maximum degree at most 2 can be easily fixed. Therefore, we completely determine the edge chromatic number of outer 1-planar graphs.

\section{The structures of outer-1-planar graphs with $\Delta=3$}

From now on, we assume that any outer-1-planar graph was drawn in the plane so that its outer-1-planarity is satisfied and the number of crossings is as few as possible, and this drawing is called an \emph{outer-1-plane graph}.
We follow the notations in \cite{ZPOPG}. Let $G$ be 2-connected outer-1-plane graph. Denote by $v_1, v_2, \ldots, v_{|G|}$ the vertices of $G$ that lie clockwise. Let $\mathcal{V}[v_i,v_j]=\{v_i, v_{i+1}, \ldots, v_j\}$ and $\mathcal{V}(v_i,v_j)=\mathcal{V}[v_i,v_j]\backslash \{v_i,v_j\}$, where the subscripts are taken modular $|G|$. Set $\mathcal{V}[v_i,v_i]=V(G)$ and $\mathcal{V}(v_i,v_i)=V(G)\setminus \{v_i\}$.
A vertex set $\mathcal{V}[v_i,v_j]$ with $i\neq j$ is \emph{non-edge} if $j=i+1$ and $v_iv_j\not\in E(G)$, is \emph{path} if $v_k v_{k+1}\in E(G)$ for all $i\leq k<j$, and is \emph{subpath} if $j>i+1$ and some edges in the form $v_kv_{k+1}$ with $i\leq k<j$ are missing. An edge $v_iv_j$ in $G$ is a \emph{chord} if $j-i\neq 1$ (mod $|G|$). By $\mathcal{C}[v_i,v_j]$, we denote the set of chords $xy$ with $x,y\in \mathcal{V}[v_i,v_j]$.

\begin{lem}\cite{ZPOPG}\label{path}
Let $v_i$ and $v_j$ be vertices of a 2-connected outer-1-plane graph $G$. If there is no crossed chords in $\mathcal{C}[v_i,v_j]$ and no edges between
$\mathcal{V}(v_i,v_j)$ and $\mathcal{V}(v_{j},v_{i})$, then $\mathcal{V}[v_i,v_j]$ is either non-edge or path.
\end{lem}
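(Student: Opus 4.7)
The plan is to reduce the lemma to the well-known fact that every face of a 2-connected plane graph is bounded by a simple cycle. The case $j=i+1$ is immediate from the definitions (non-edge or path depending on whether $v_iv_{i+1}\in E(G)$), so I will assume $j\geq i+2$ and consider the auxiliary graph $H':=G[\mathcal{V}[v_i,v_j]]+v_iv_j$, where the edge $v_iv_j$ is added virtually if it is not already in $G$. My goal will be to show that $H'$ is both 2-connected and outerplanar, from which every arc edge $v_kv_{k+1}$ must be present via the Hamilton-cycle structure of the outer face.

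First I will argue that $H'$ is outerplanar. Since every chord in $\mathcal{C}[v_i,v_j]$ is uncrossed by hypothesis, the drawing of $G[\mathcal{V}[v_i,v_j]]$ inherited from the outer-1-plane drawing of $G$ is already crossing-free; the edge $v_iv_j$ can then be routed along the boundary arc that formerly carried the vertices of $\mathcal{V}(v_j,v_i)$ without introducing a crossing. This yields a plane embedding of $H'$ with all vertices on the outer face in the cyclic order $v_i,v_{i+1},\ldots,v_j,v_i$.

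Next I will show that $H'$ is 2-connected. For any $w\in V(H')$ and any $u_1,u_2\in V(H')\setminus\{w\}$, the 2-connectivity of $G$ provides a path $P$ from $u_1$ to $u_2$ in $G-w$; the hypothesis forbidding edges between $\mathcal{V}(v_i,v_j)$ and $\mathcal{V}(v_j,v_i)$ forces $P$ to cross between $\mathcal{V}[v_i,v_j]$ and its exterior only through the vertices $v_i$ and $v_j$. When $w\in\mathcal{V}(v_i,v_j)$, any excursion of $P$ through $\mathcal{V}(v_j,v_i)$ enters via one of $\{v_i,v_j\}$ and leaves via the other, so it can be replaced by the single edge $v_iv_j\in E(H')$, producing a valid path in $H'-w$. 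When $w\in\{v_i,v_j\}$, any attempted excursion would need to re-enter $\mathcal{V}[v_i,v_j]$ through the removed vertex itself (the only other gateway having already been used in the simple path $P$), which is impossible; hence $P$ already lies entirely in $H'-w$.

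Once $H'$ is seen to be a 2-connected outerplanar graph on $j-i+1\geq 3$ vertices, the boundary of its outer face is necessarily a simple Hamilton cycle visiting the vertices in the embedded cyclic order $v_i,v_{i+1},\ldots,v_j,v_i$; consequently this cycle must use each of the edges $v_iv_{i+1},v_{i+1}v_{i+2},\ldots,v_{j-1}v_j,v_jv_i$. In particular every arc edge $v_kv_{k+1}$ with $i\leq k<j$ lies in $H'$ and hence in $G$, so $\mathcal{V}[v_i,v_j]$ is a path. I expect the main obstacle to be the careful verification of 2-connectivity of $H'$: one must use the virtual edge $v_iv_j$ as a shortcut for external detours when $w$ is an interior vertex of the arc, while exploiting the blocking effect of the removed endpoint when $w\in\{v_i,v_j\}$, and all of this relies essentially on both hypotheses of the lemma.
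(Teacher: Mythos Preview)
The paper does not give its own proof of this lemma: it is quoted verbatim from \cite{ZPOPG} and used as a black box in the proof of Theorem~\ref{structure}. So there is nothing in the present paper to compare your argument against.

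That said, your argument is sound and self-contained. Forming $H'=G[\mathcal{V}[v_i,v_j]]+v_iv_j$, checking outerplanarity from the uncrossed-chord hypothesis, checking $2$-connectivity from the no-edges-across hypothesis, and then reading off the boundary Hamilton cycle is exactly the right reduction. The one place worth tightening is the $2$-connectivity step when $w\in\{v_i,v_j\}$: your claim that a simple $u_1$--$u_2$ path in $G-w$ cannot make an excursion into $\mathcal{V}(v_j,v_i)$ is correct, but you should state explicitly the case $u_1=v_j$ (or $u_2=v_j$), where the path could in principle begin with an outward step through $v_j$; simplicity still forbids re-entry, so the path never leaves $\mathcal{V}[v_i,v_j]\setminus\{w\}$. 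With that small clarification the proof is complete and arguably cleaner than tracking down the original reference.
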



\begin{thm}\label{structure}
Every 2-connected outer-1-planar graph with maximum degree at most 3 contains one of the configurations $G_1,G_2,\dots,G_7$ and $H_t$ as in Figure \ref{o1p}. Moreover,\\
(a) if $G$ contains $G_2$ and $x\neq y$, then the graph derived from $G$ by deleting $u$ and identifying $v$ with $w$ is outer-1-planar;\\
(b) if $G$ contains $G_4$ and $x\neq y$, then the graph derived from $G$ by deleting $u_0,v_0,w$ and identifying $u_1$ with $v_1$ is outer-1-planar;\\
(c) if $G$ contains $G_8$ and $x\neq y$, then the graph derived from $G$ by deleting $u_0,u_1,v_0$ and identifying $u_2$ with $v_1$ is outer-1-planar;\\
(d) if $G$ contains $H_t$ and $x\neq y$, then the graph derived from $G$ by deleting $u_0,u_1,\ldots,u_t,v_0,v_1,\ldots,v_t$ and adding a new edge $xy$ (if it does not exist) is outer-1-planar.
\begin{figure}
  \begin{center}
  \includegraphics[width=4.5in]{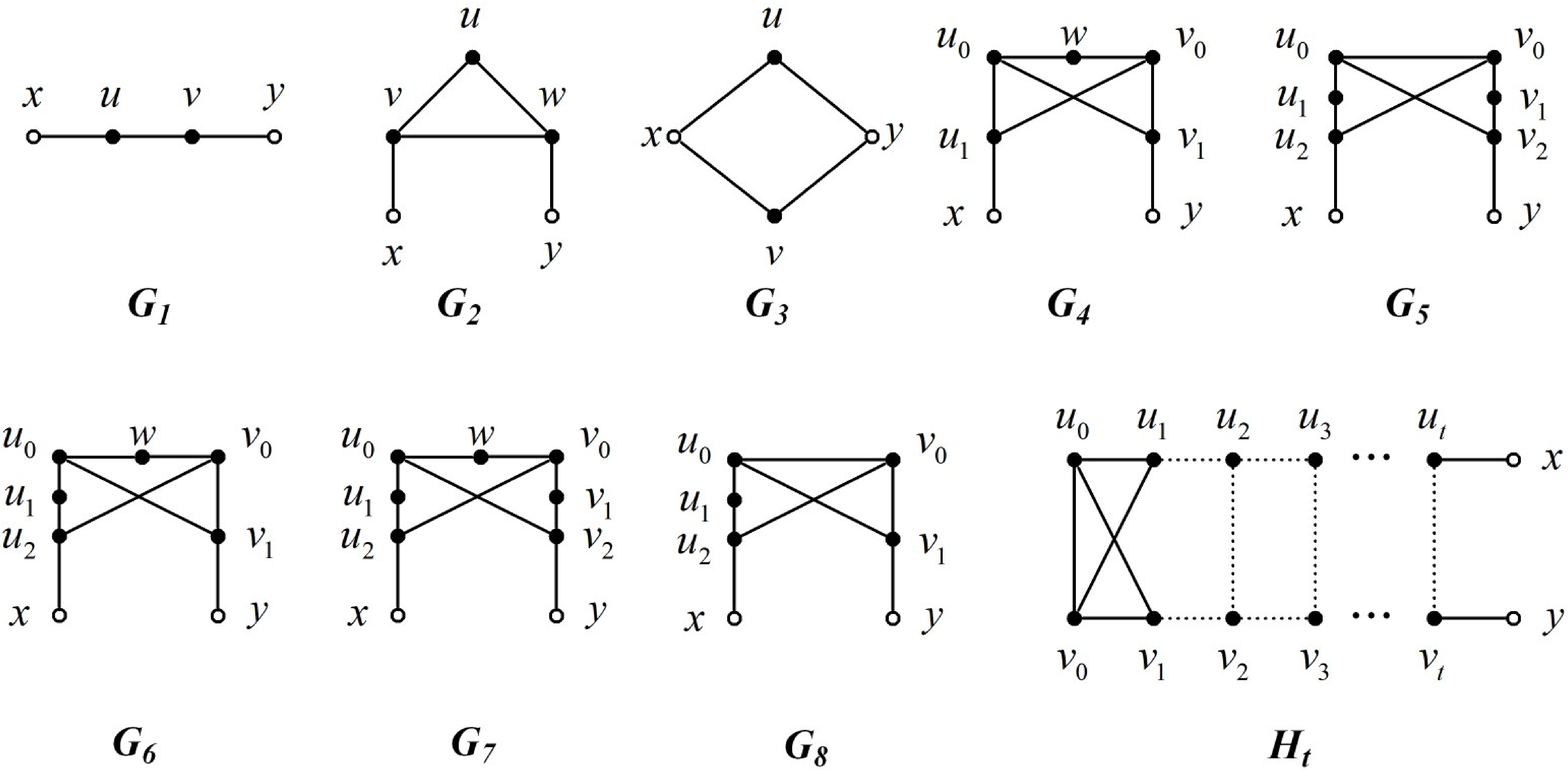}\\
  \end{center}

  \caption{Structures in outer-1-planar graph with maximum degree at most 3}\label{o1p}
\end{figure}
\end{thm}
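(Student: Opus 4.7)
The plan is to fix an outer-1-plane drawing of $G$ (which exists by hypothesis) and split into two cases depending on whether that drawing contains any pair of crossed chords. In the crossing-free case, $G$ is 2-connected outerplanar, so the outer boundary $v_1v_2\cdots v_n$ is a Hamilton cycle and every chord is uncrossed. Choosing a chord $v_iv_j$ spanning the minimum number of outer-boundary vertices (or observing a degree-$2$ vertex if no chord exists) together with the bound $\Delta(G)\le 3$ pins down a very small local picture, which should be exactly the configuration $G_1$.

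In the crossing case, I would pick a pair of crossed chords $v_av_c, v_bv_d$ with $a<b<c<d$ that is \emph{minimal} in the sense that the number of outer-boundary vertices inside $\mathcal{V}[v_a,v_d]$ is as small as possible. Lemma \ref{path}, applied in turn to $\mathcal{V}[v_a,v_b]$, $\mathcal{V}[v_b,v_c]$, $\mathcal{V}[v_c,v_d]$, then forces each of these arcs to be a non-edge or a path: a further crossed chord inside would contradict minimality, and a further uncrossed chord could be taken instead to reduce the count. The hypothesis $\Delta(G)\le 3$ is decisive here. Each of the four ``corner'' vertices $v_a,v_b,v_c,v_d$ is already incident with the crossed chord plus edges of the outer cycle, so at most one additional neighbor is possible; each interior vertex of the three arcs has at most one chord attached. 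A finite case analysis on the lengths of the three arcs and on where any third edges at $v_a,v_b,v_c,v_d$ can lie now produces the configurations $G_2,G_3,\ldots,G_8$. When every such base configuration is ruled out, the innermost crossing must be flanked on one side by another crossing of exactly the same local shape; iterating the minimality argument threads a chain of crossings from $x$ to $y$, which is precisely the ladder configuration $H_t$.

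To verify parts (a)--(d), I would argue locally in the plane. In each of the configurations $G_2, G_4, G_8, H_t$, the vertices to be deleted occupy a small topological disc $D$ bounded by part of the outer face together with the edges leading to $x$ and $y$. Erasing the contents of $D$ and then performing the prescribed identification (or, for (d), drawing the new edge $xy$ through $D$) is a purely local modification: all other vertices remain on the outer face, no existing edge is crossed more than it already was, and no new crossing is introduced because the modification takes place inside a disc whose boundary edges are intact. The hypothesis $x\neq y$ is exactly what is needed to avoid creating a loop by the identification or the new edge. A direct check that the resulting drawing still satisfies the outer-1-planarity condition finishes each part.

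The main obstacle will be Case~2 of the structural argument: the subcases multiply quickly once one tracks the possible arc lengths and the possible locations of the ``third'' edges at $v_a,v_b,v_c,v_d$, and the right minimality parameter must be chosen so that Lemma \ref{path} applies simultaneously to all three subarcs. The most delicate point is recognizing when the failure of every base configuration $G_2,\ldots,G_8$ must repeat itself, and then packaging this repetition as the ladder $H_t$ with clean termination at $y$; this is essentially an induction on the number of crossings trapped between $x$ and $y$, and getting the boundary of that induction right is where I expect the proof to need the most care.
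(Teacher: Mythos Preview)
Your overall strategy matches the paper's: dispose of the outerplanar case, choose an innermost crossed pair, apply Lemma~\ref{path} to the three arcs, and run a finite case analysis on their lengths (the paper reduces each arc to length at most $2$ and then enumerates all combinations, producing $G_1$--$G_8$). Your treatment of (a)--(d) via a local disc modification is also essentially what the paper does, verifying each condition in the drawing as the corresponding configuration is located.

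The gap is in how $H_t$ arises. First, $H_t$ is not a ``chain of crossings'': it contains exactly one crossed pair of edges, surrounded by a ladder of \emph{uncrossed} rungs, and the iteration that builds it checks successively whether $v_{a-1}v_{d+1}$, then $v_{a-2}v_{d+2}$, etc., are present. Second, and more seriously, once all three arcs have length $1$ and all four corners have degree $3$, the third edge at a corner vertex (say $v_d$) may be a \emph{chord} $v_dv_r$ rather than the boundary edge $v_dv_{d+1}$. That chord must itself be crossed (otherwise $v_r$ is a cut-vertex), so a second crossing now sits adjacent to your innermost one but \emph{outside} the interval $\mathcal{V}[v_a,v_d]$ you minimized over; your minimality hypothesis gives no control here. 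The paper spends most of its length on exactly this case, introducing auxiliary ``$A$-clusters'' and ``$B$-clusters'' with their own minimality assumptions, and the $H_t$ eventually found may sit around one of these secondary crossings rather than the original one. Your single minimality parameter does not see this case, so the plan as written does not close; you will need a second layer of minimality (or an equivalent device) to push the argument through.
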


\begin{proof}
We prove this result by contradiction. If there is no crossings in $G$, then $G$ is outerplanar and the results hold (cf.\,\cite{Wang}). Therefore we assume that crossings appear in $G$.
Let $v_iv_j$ and $v_lv_k$ be two mutually crossed chords in $G$ with $1\leq i<k<j<l$. Without loss of generality, assume that $i=1$ and there is no other pair of
mutually crossed chords in $\mathcal{C}[v_i,v_l]$. By Lemma \ref{path}, any of $\mathcal{V}[v_i,v_k],\mathcal{V}[v_k,v_j]$ and $\mathcal{V}[v_j,v_l]$ is either non-edge or path. Suppose that $k-i\geq 3$ and there is a chord $v_rv_s$ with $i\leq r<s\leq k$. Note that $\mathcal{V}[v_i,v_k]$ is path now. If $s-r\geq 3$, then the vertices $v_{r+1},\ldots,v_{s-1}$ are all of degree two, thus the configuration $G_1$ appears. If $s-r=2$, then $d(v_{r+1})=2$. If $d(v_r)=2$ or $d(v_s)=2$, then $G_1$ appears. If $d(v_r)=3$ and $d(v_s)=3$, then $G_2$ appears, and moreover, one can easily check that the condition (a) in the result we are proving holds. On the other hand, if $k-i\geq 3$ and there is no chords in $\mathcal{C}[v_i,v_k]$, then it is easy to see that $G_1$ appears. Therefore, we assume that $k-i\leq 2$, and similarly, assume that $j-k\leq 2$ and $l-j\leq 2$. If two of $\mathcal{V}[v_i,v_k],\mathcal{V}[v_k,v_j]$ and $\mathcal{V}[v_j,v_l]$ are non-edges, then we either find a 1-valent vertex in $G$ or have one another drawing of $G$ so that the number of crossing reduces one. Hence at least two of $\mathcal{V}[v_i,v_k],\mathcal{V}[v_k,v_j]$ and $\mathcal{V}[v_j,v_l]$ are paths. Suppose that $\mathcal{V}[v_i,v_k]$, $\mathcal{V}[v_k,v_j]$ are paths and $\mathcal{V}[v_j,v_l]$ is non-edge (the case when $\mathcal{V}[v_i,v_k]$ is non-edge and $\mathcal{V}[v_k,v_j]$, $\mathcal{V}[v_j,v_l]$ are paths is similar). If $j-k=k-i=1$, then $d(v_j)=2$ and $d(v_k)=3$, which implies either $G_1$ or $G_2$ occurs, and moreover, if $G_2$ appears, then (a) holds. If $j-k=1$ and $k-i=2$, then $d(v_{i+1})=d(v_j)=2$, which implies the appearance of $G_3$. If $j-k=2$, then $d(v_{j-1})=d(v_j)=2$ and $G_1$ appears. Suppose that $\mathcal{V}[v_i,v_k]$, $\mathcal{V}[v_j,v_l]$ are paths and $\mathcal{V}[v_k,v_j]$ is non-edge. If $k-i=l-j=1$, then $G_3$ occurs. If $k-i=2$ (the case when $l-j=2$ is similar), then $d(v_{k-1})=2$, which implies either $G_1$ or $G_2$ occurs, and moreover, one can check that (a) holds once $G_2$ appears. At last, we assume that $\mathcal{V}[v_i,v_k],\mathcal{V}[v_k,v_j]$ and $\mathcal{V}[v_j,v_l]$ are all paths.
If $j-k=2$ and $k-i=l-j=1$, then $G_4$ occurs, and moreover, (b) holds. If $k-i=2$ and $j-k=l-j=1$, or $l-j=2$ and $k-i=j-k=1$, then $G_8$ appear, and moreover, (c) holds. If $k-i=j-k=2$ and $l-j=1$, or $j-k=l-j=2$ and $k-i=1$, then $G_6$ appears. If $k-i=l-j=2$ and $j-k=1$, then $G_5$ appears. If $k-i=j-k=l-j=2$, then $G_7$ occurs. If $k-i=j-k=l-j=1$, then $d(v_k)=d(v_j)=3$. If $d(v_l)=2$, then $v_i$ is a cut vertex unless $G$ is $K_4-e$. Hence we assume $d(v_l)=3$ and $d(v_i)=3$ by symmetry. Let $v_r$ be a vertex of $G$ with $v_lv_r\in E(G)$ and $r>l$. Recall that we have assumed that $i=1$, thus $k=2,j=3$ and $l=4$.

\vspace{2mm} Case 1. $v_4v_r$ is a chord, i.e., $r\geq 6$.

\vspace{2mm} If $v_4v_r$ is non-crossed, then it is easy to see that $v_r$ disconnects the set $S=\{v_{5},\ldots,v_{r-1}\}\neq \emptyset$ and $V(G)\setminus S$, so $v_r$ is a cut-vertex, a contradiction. Hence we assume that $v_4v_r$ is crossed by another chord $v_xv_y$ with $x<r<y$.

\begin{figure}
 \begin{center}
  \includegraphics[width=4.5in]{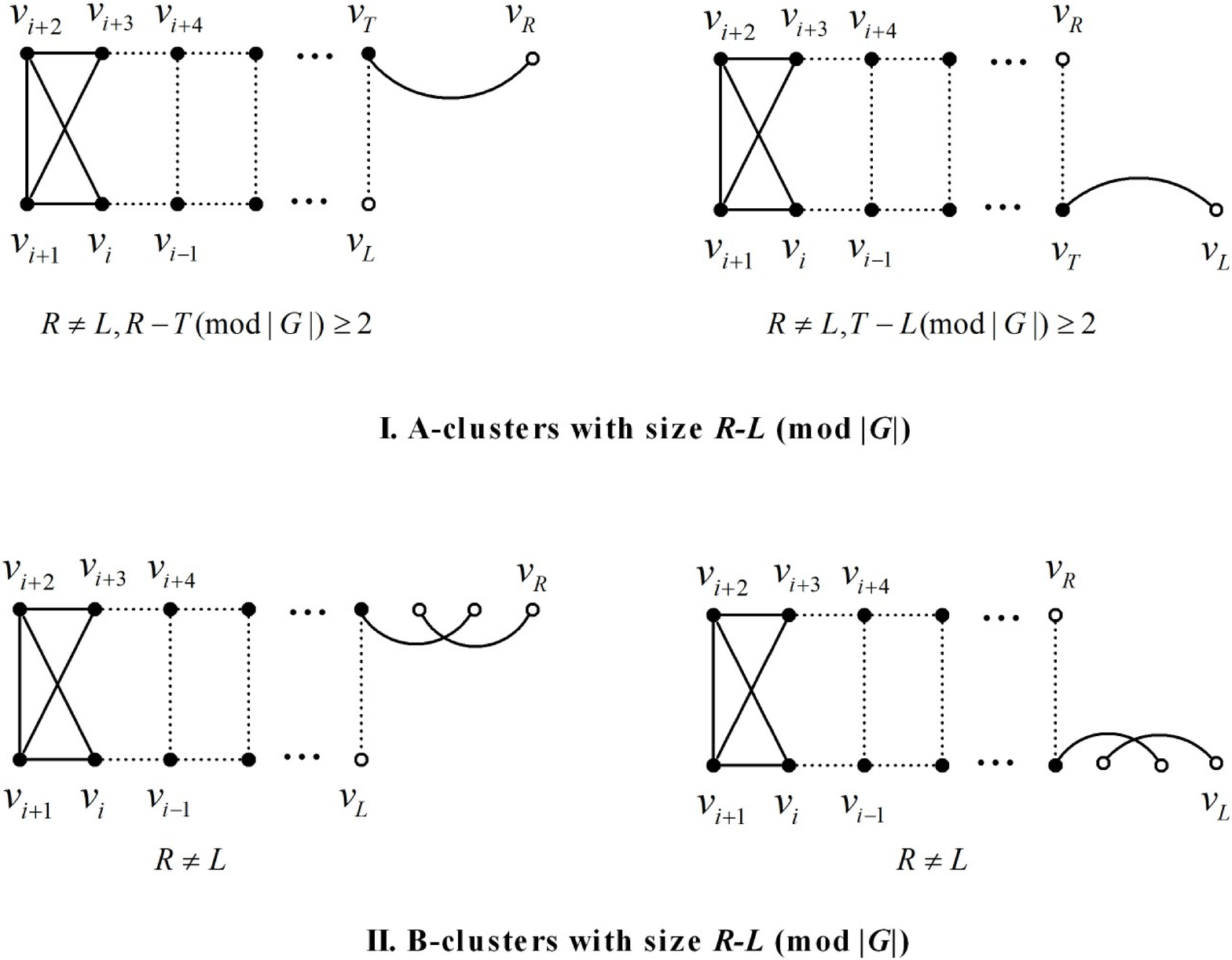}\\
 \end{center}

  \caption{Definitions of A-clusters and B-clusters}\label{cluster}
\end{figure}

\textbf{Notations:} The graphs that are isomorphic to any of the graphs in Figure \ref{cluster}-I and have the same drawings are called \emph{A-clusters} in $G$. The graphs that are isomorphic to any of the graphs in Figure \ref{cluster}-II and have the same drawings are called \emph{B-clusters} in $G$.
The \emph{size} of an A- or B-cluster is $R-L$ (mod $|G|$), where $R$ and $L$ are the subscripts of the far right vertex and the far left vertex (see in a clockwise direction from left to right) in the A- or B-cluster, respectively. If the size of an A- or B-cluster is smaller than another one A- or B-cluster, then we say the former A- or B-cluster is \emph{shorter} than the latter A- or B-cluster.
Note that every B-cluster contains a A-cluster.

For example, the graph induced by the edges $v_1v_2,v_1v_3,v_2v_3,v_2v_4,v_3v_4$ and $v_4v_r$ is an A-cluster with size $r-1$, the graphs induced by the edges $v_1v_2,v_1v_3,v_2v_3,v_2v_4,v_3v_4,v_4v_r$ and $v_xv_y$ is a B-cluster with size $y-1$.

Without loss of generality, assume that\\[.1em]
\textbf{(1)} \emph{there is no A-clusters with size less than $r-1$ in the graph induced by $\mathcal{V}[v_1,v_r]$,}\\
\textbf{(2)} \emph{there is no B-clusters with size less than $y-1$ in the graph induced by $\mathcal{V}[v_1,v_y]$.}\\[.1em]
Otherwise, we consider a shorter A- or B-cluster by similar arguments as below.

Suppose that there is a pair of crossed chords $v_{i'}v_{j'}$ and $v_{k'}v_{l'}$ with $4<i'<k'<j'<l'\leq x$. Similarly we can assume that $k'-i'=j'-k'=l'-j'=1$ and $d(v_{i'})=d(v_{l'})=3$. Note that $i'\neq 4$.
If $l'=x$, then let $v_{s'}$ be the third neighbor of $i'$. One can see that a copy of $H_1$ appears now, and moreover, $v_yv_{s'}\not\in E(G)$. It is easy to see that the graph obtained from $G$ by adding the new edge $v_yv_{s'}$ and removing $v_{l'}v_y$ is already outer-1-planar, hence (d) holds. Therefore we assume that $l'<x$.

If there is chord $v_{l'}v_{r'}$, then by (1), $4<r'\leq i'$. If $r'=i'$, then $G$ is disconnected, a contradiction, so we assume that $r'<i'$. Note that $v_{l'}v_{r'}$ is a crossed chord because otherwise $v_{r'}$ would be a cut vertex of $G$.
Since $d(v_{i'})=3$, there is an edge $v_{i'}v_{s'}$. If $s'>l'$, then we can redraw the graph by changing the order of $v_{i'},v_{k'},v_{j'}$ and $v_{l'}$ on the outer face to $v_{l'},v_{j'},v_{k'}$ and $v_{i'}$. After doing so, we avoid the crossing that generates by $v_{i'}v_{s'}$ crossing $v_{l'}v_{r'}$, which contradicts the fact that the drawing of $G$ minimizes the number of possible crossings. If $s'<i'-1$, then $v_{i'}v_{s'}$ is a chord and an A-cluster with size less than $r-1$ appears, a contradiction to (1). Hence $s'=i'-1$. Now one can see that a copy of $H_1$ appears in $G$. If $r'\neq s'$ and $v_{r'}v_{s'}\not\in E(G)$, then the graph derived from $G$ by adding the new edge $v_{r'}v_{s'}$ and removing $v_{l'}v_{r'}$ is already outer-1-planar, hence (d) satisfies.

If $v_{r'}v_{s'}\in E(G)$, then it is easy to see that $v_{l'}v_{r'}$ is crossed by a chord $v_{s'}v_{t'}$ with $4<t'<x$, and moreover, if $v_{r'}v_{s'}$ is a chord then it must be non-crossed. Note that the graph induced by $v_{i'}v_{j'},v_{i'}v_{k'},v_{j'}v_{l'},v_{k'}v_{j'},v_{k'}v_{l'}$ and $v_{l'}v_{r'}$ is an A-cluster. Without loss of generality, assume that\\[.1em]
\textbf{(3)} \emph{there is no A-clusters contained in the graph induced by $\mathcal{V}[v_{r'},v_{s'}]$,}\\[.1em]
otherwise we consider this A-cluster instead of the one we mentioned above.

Suppose that $v_{r'}v_{s'}$ is a chord. If there is no crossed chords in $\mathcal{C}[v_{r'},v_{s'}]$, then by Lemma \ref{path}, $\mathcal{V}[v_{r'},v_{s'}]$ is path, which implies the appearance of $G_1$ or $G_2$, and moreover, if $G_2$ appears then (a) holds. If there is a pair of crossed chords $v_{i''}v_{j''}$ and $v_{k''}v_{l''}$ with $r'<i''<k''<j''<l''<s'$, then we can assume that
$v_{i''}v_{k''},v_{k''}v_{j''},v_{j''}v_{l''}\in E(G)$, and furthermore, we have $i''\neq r'$, $l''\neq s'$ and $v_{i''-1}v_{i''},v_{l''}v_{l''+1}\in E(G)$ by (3). Now we see a copy of an $H_1$.
If $v_{i''-1}v_{l''+1}\not\in E(G)$, then adding an edge $v_{i''-1}v_{l''+1}$ to $G$ do not disturb its outer-1-planarity, hence (d) satisfies.
If $v_{i''-1}v_{l''+1}\in E(G)$, then by (3), we have $v_{l''+1}v_{l''+2},v_{i''-2}v_{i''-1}\in E(G)$ and thus a copy of $H_2$. We then discuss according whether $v_{i''-2}v_{l''+2}$ is an edge of $G$ or not and show that (d) satisfies. Here one can easily find that the next arguments are similar and iterative. Since the chord $v_{r'}v_{s'}$ is non-crossed and $\Delta(G)\leq 3$, we would finally find a copy of $H_k$ for some integer $k$ so that (d) satisfies and there is no way to construct a copy of $H_{k+1}$ based on this $H_k$. Hence, $v_{r'}v_{s'}$ cannot be a chord, which implies $r'=s'-1$.
If $4<t'<r'$, then $v_{t'}$ is a cut-vertex, a contradiction. If $t'>l'$, then redraw the graph by reserving the order of $v_{s'},v_{i'},v_{k'},v_{j'}$ and $v_{l'}$ on the boundary of the outer face. This would avoid the crossing generated by $v_{l'}v_{r'}$ crossing $v_{s'}v_{t'}$, contradicting the fact the the drawing of $G$ minimize the number of crossings.

Hence $v_{l'}v_{l'+1}\in E(G)$ and $v_{i'-1}v_{i'}\in E(G)$ by symmetry. Now we see a copy of an $H_1$.
If $v_{i'-1}v_{l'+1}\not\in E(G)$, then adding an edge $v_{i'-1}v_{l'+1}$ to $G$ do not disturb its outer-1-planarity, hence (d) satisfies.
If $v_{i'-1}v_{l'+1}\in E(G)$, then by similar arguments as above, we have $v_{l'+1}v_{l'+2},v_{i'-2}v_{i'-1}\in E(G)$ and thus a copy of $H_2$. We then discuss according whether $v_{i'-2}v_{l'+2}$ is an edge of $G$ or not and show that (d) satisfies. Here one can easily find that the next arguments are similar and iterative. Since there are finite vertices in $\mathcal{V}[v_4,v_x]$ and $v_4$ has no neighbors in $\mathcal{V}[v_4,v_x]$, we would finally find a copy of $H_k$ for some integer $k$ so that (d) satisfies and there is no way to construct a copy of $H_{k+1}$ based on this $H_k$. Therefore, there is no crossed chords in $\mathcal{C}[v_4,v_x]$, thus by Lemma \ref{path}, $\mathcal{V}[v_4,v_x]$ is either non-edge or path. Since $v_4$ has no neighbors in $\mathcal{V}[v_4,v_x]$, $\mathcal{V}[v_4,v_x]$ can only be a non-edge and thus $x=5$. By similar arguments as above, one can also show that there is no crossed chords in $\mathcal{C}[v_5,v_r]$
and thus $\mathcal{V}[v_5,v_r]$ is either non-edge or path. If $\mathcal{V}[v_5,v_r]$ is a non-edge, then $v_5$ is a 1-valent vertex, a contradiction. Hence $\mathcal{V}[v_5,v_r]$ is a path. If there is a chord in $\mathcal{C}[v_5,v_r]$, then it is easy to see that either $G_1$ or $G_2$ appear, and moreover, if $G_2$ occurs then (a) satisfies. Therefore, there is no chords in $\mathcal{C}[v_5,v_r]$. If $r\geq 7$, then $d(v_5)=d(v_6)=2$ and $G_1$ appears. Hence we assume that $r=6$ and $v_5v_6\in E(G)$. Note that $d(v_5)=2$.

Suppose that there is a pair of crossed chords $v_{i'}v_{j'}$ and $v_{k'}v_{l'}$ with $6\leq i'<k'<j'<l'\leq y$. Similarly as before, we can assume that $k'-i'=j'-k'=l'-j'=1$ and $d(v_{i'})=d(v_{l'})=3$, and moreover, assume that $l'\neq y$ (otherwise $v_6$ is a cut-vertex). If there is a chord $v_{l'}v_{r'}$ with $r'\neq k',i'$, then $v_{l'}v_{r'}$ is crossed because otherwise $v_{r'}$ would be a cut-vertex. If $r'>l'$, then by (2), $v_{l'}v_{r'}$ can only be crossed by a chord $v_{x'}v_{y'}$ with $l'<x'<t'$ and $6\leq y'\leq i'$. If $y'=i'$, then $v_{r'}$ is a cut-vertex, a contradiction, thus $6\leq y'< i'$. Since $d(v_{i'})=3$, there is an edge $v_{i'}v_{s'}$ with $s'<i'$. If $v_{i'}v_{s'}$ is a chord, then it is crossed by a chord $v_{a}v_b$ with $y'\leq b<s'<a<i'$, which implies a B-cluster with size less then $y-1$ in $G[\mathcal{V}[v_1,v_y]]$, a contradiction to (2). Hence $s'=i'-1$.
If $s'\neq y'$, or $s'=y'$ and $v_{y'}v_{r'}\not\in E(G)$, then $v_{s'}v_{r'}\not\in E(G)$. In this case a copy of $H_1$ appears, and moreover, the graph derived from $G$ by adding a new edge
$v_{s'}v_{r'}$ and removing the edge $v_{l'}v_{r'}$ is already outer-1-planar, thus (d) holds. If $s'=y'$ and $v_{y'}v_{r'}\in E(G)$, then $v_{r'}$ becomes a cut-vertex, a contradiction. Hence we assume that $6\leq r'\leq i'$.

If $r'=i'$, then $G$ has an isolate $K_4$, a contradiction, so suppose $6\leq r'<i'$. Since $d(v_{i'})=3$, there is an edge $v_{i'}v_{s'}$ with $s'\neq k',j'$. If $v_{i'}v_{s'}$ is a chord, then by similar argument as above, we shall assume that $s'>l'$. We redraw the graph by reversing the order of $v_{i'},v_{k'},v_{j'}$ and $v_{l'}$ on the boundary of the outer face. This operation reduces the number of crossings by one, a contradiction. Hence $s'=i'-1$, which implies that $r'\neq i'-1$, otherwise $v_{r'}$ is a cut-vertex. If $v_{i'-1}v_{r'}\not\in E(G)$, then the graph obtained from $G$ by adding an edge $v_{i'-1}v_{r'}$ and removing the edge $v_{r'}v_{l'}$ is already outer-1-planar, so (d) satisfies. If $v_{i'-1}v_{r'}\in E(G)$, then $v_{l'}v_{r'}$ can only be crossed by an edge that is incident with $v_{i'-1}$, say $v_{i'-1}v_{t'}$. If $t'<r'$, then it is easy to see that $v_{r'}$ is a cut-vertex, a contradiction. Hence we assume $t'>l'$. By similar arguments as the one after (3) we can claim that $r'=i'-2$ (here we shall assume, without loss of generality, that there is no A-clusters contained in the graph induced by $\mathcal{V}[v_{r'},v_{s'}]$). Therefore, we can reduce the number of crossings by one after redrawing the graph by reversing the order of $v_{i'-1},v_{i'},v_{k'},v_{j'}$ and $v_{l'}$ on the boundary of the outer face, a contradiction.

Therefore, $v_{l'}v_{l'+1}\in E(G)$ and $v_{i'-1}v_{i'}\in E(G)$ by symmetry.
Now we find a copy of a $H_1$.
If $v_{i'-1}v_{l'+1}\not\in E(G)$, then adding an edge $v_{i'-1}v_{l'+1}$ to $G$ do not disturb its outer-1-planarity, hence (d) holds.
If $v_{i'-1}v_{l'+1}\in E(G)$, then by similar arguments as above, we may have $v_{l'+1}v_{l'+2},v_{i'-2}v_{i'-1}\in E(G)$ and thus a copy of $H_2$. We then discuss according whether $v_{i'-2}v_{l'+2}$ is an edge of $G$ or not and show that (d) satisfies. Here one can again find that the next arguments are similar and iterative. Since there are finite vertices in $\mathcal{V}[v_6,v_y]$ and $v_5$ have no neighbors in $\mathcal{V}(v_6,v_y)$, we would finally find a copy of $H_k$ for some integer $k$ so that (d) satisfies and there is no way to construct a copy of $H_{k+1}$ based on this $H_k$. Therefore, there is no crossed chords in $\mathcal{C}[v_6,v_y]$, which implies by Lemma \ref{path} that $\mathcal{V}[v_6,v_y]$ is either a non-edge or a path. If it is a non-edge, then $d(v_6)=2$ and $G_1$ appears. If $\mathcal{V}[v_6,v_y]$ is a path, then $7\leq y\leq 8$ because otherwise $G_1$ occurs. If $y=8$, then $d(v_7)=2$ and thus $G_3$ appears. If $y=7$, then $d(v_6)=3$ and $G_2$ occurs, and moreover, (a) holds.

\vspace{2mm} Case 2. $v_4v_5,v_{|G|}v_1\in E(G)$.

\vspace{2mm} Note that a copy of $H_1$ appears now. If $v_{|G|}v_5\not\in E(G)$, then adding an edge $v_{|G|}v_5$ to $G$ do not disturb its outer-1-planarity, hence (d) holds. If $v_{|G|}v_5 \in E(G)$, then by similar arguments as in Case 1, we have $v_5v_6,v_{|G|-1}v_{|G|}\in E(G)$ and $H_2$ occurs. Obviously, the next arguments are
iterative and it is easy to see that (d) holds.
\end{proof}



\section{Edge coloring outer-1-planar graphs with $\Delta=3$}

We now investigate the edge colorings of outer-1-planar graphs with maximum degree 3. It is easy to see that the smallest (in terms of the order) outer-1-planar graph with $\Delta(G)=3$ and $\chi'(G)=4$ is the graph obtained from $K_5$ by removing two adjacent edges, say $K_5-2e$.

\begin{defn}\label{defn}
A graph $G$ is belong to the class $\mathcal{P}$, if it derives from $K_5-2e$ by a sequence of the following operations:
\begin{itemize}\vspace{-2mm}
  \item Remove a vertex $z$ of degree two and paste a copy of $G_2$, or $G_4$, or $G_8$ on the current graph by identifying $x$ and $y$ with $z_1$ and $z_2$, respectively, where $z_1$ and $z_2$ are the neighbors of $z$; \vspace{-2mm}
  \item Remove an edge $z_1z_2$ and paste a copy of $H_t$ for some integer $t$ on the current graph by identifying $x$ and $y$ with $z_1$ and $z_2$, respectively.
\end{itemize}
\end{defn}


The configurations $G_2,G_4,G_8$ and $H_t$ mentioned in above definition are the ones in Figure \ref{cluster}. One can easy to check that any graph $G\in \mathcal{P}$ has maximum degree 3 and minimum degree 2.

\begin{thm}\label{thmA}
If $G\in \mathcal{P}$, then $\chi'(G)=4$.
\end{thm}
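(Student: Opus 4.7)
My plan is to argue by induction on the number $k$ of paste operations from Definition~\ref{defn} used to build $G$ from $K_5-2e$. The upper bound $\chi'(G)\le 4$ is immediate from Vizing's theorem once one checks, as the paper remarks just after Definition~\ref{defn}, that every $G\in\mathcal{P}$ has $\Delta(G)=3$. The content of the theorem is the matching lower bound $\chi'(G)\ge 4$; for this I would assume for contradiction that $G$ admits a proper 3-edge-coloring $\phi$ with colors $\{1,2,3\}$ and then ``undo'' the final paste operation to produce a proper 3-edge-coloring $\phi'$ of the predecessor $G'\in\mathcal{P}$. Since $G'$ is constructible in $k-1$ operations, this contradicts the induction hypothesis $\chi'(G')=4$. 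For the base case $k=0$, the plan is to verify $\chi'(K_5-2e)=4$ by a direct enumeration of candidate 3-edge-colorings up to symmetry.

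When the final operation is of the first type (paste $G_2$, $G_4$, or $G_8$ at a degree-2 slot $z$ with neighbors $z_1,z_2$), the plan is to examine the external edges leaving the gadget at $x=z_1$ and at $y=z_2$ and to show that the colors which $\phi$ assigns there force a valid pair of colors on the reinstated edges $zz_1$ and $zz_2$, in particular forcing distinct colors on those two edges. Because each of $G_2,G_4,G_8$ is a small configuration whose interior contains a triangle touching both $x$ and $y$, I expect this to reduce to a finite case analysis on each gadget: listing the few proper 3-edge-colorings of the gadget with the external edges as parameters, one verifies that in every case two distinct colors remain available at $z_1$ and $z_2$ respectively. The coloring $\phi'$ is then defined by copying $\phi$ on the edges outside the gadget and assigning those two colors to $zz_1$ and $zz_2$.

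When the final operation is of the second type (paste $H_t$ at a removed edge $z_1z_2$), the plan is to run a sub-induction on $t$. In the base case of the sub-induction, direct inspection of $H_1$ shows that for any proper 3-edge-coloring of $G$, the external edges at $x$ and at $y$ leave some color of $\{1,2,3\}$ simultaneously available at both endpoints, and this common missing color can be assigned to the reinstated edge $z_1z_2$ of $G'$. For larger $t$, the strategy is to ``peel off'' the outer rung of the $H_t$-ladder via the pair $(u_t,v_t)$, observe that its coloring is rigidly determined up to permutation by the colors at $x$ and at $y$, and reduce to the statement for an $H_{t-1}$ sub-gadget between the next pair of vertices. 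Throughout, Theorem~\ref{structure}(a)--(d) guarantees that the reduced graph $G'$ is again outer-1-planar, so the reduction stays inside the class $\mathcal{P}$.

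I expect the main obstacle to be the Operation-2 case: since the family $H_t$ is parameterized by an unbounded integer $t$, a finite enumeration will not suffice and one needs a uniform color-propagation (Kempe-chain style) argument down the ladder. The Operation-1 cases, by contrast, are finite checks on three explicit small configurations from Figure~\ref{cluster} and should be routine. One boundary technicality (already flagged by Theorem~\ref{structure}(d)) is that the reinstated edge $z_1z_2$ in the Operation-2 undo might already be present in $G\setminus H_t$; this case has to be handled separately but does not change the overall scheme.
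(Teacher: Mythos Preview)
Your approach is essentially the paper's: induct on the number of operations, and for the inductive step show that any proper $3$-edge-coloring of $G$ restricts to one of its predecessor $G'$ (via $c(vx)\neq c(wy)$ for $G_2$, the analogous inequalities for $G_4,G_8$, and $c(u_tx)=c(v_ty)$ for $H_t$), contradicting $\chi'(G')=4$. One small correction: you do not need Theorem~\ref{structure}(a)--(d) to conclude $G'\in\mathcal{P}$, since membership in $\mathcal{P}$ is defined by the construction sequence rather than by outer-1-planarity (indeed the paper's closing Remark notes that graphs in $\mathcal{P}$ need not be outer-1-planar), so $G'\in\mathcal{P}$ is automatic once you undo the last operation, and your worry about the reinstated edge $z_1z_2$ already being present does not arise in this direction.
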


\begin{proof}
Let $F$ be a graph in $\mathcal{P}$. 
If there is a vertex $z$ of degree two with neighbors $z_1$ and $z_2$ in $F$, then remove it and paste a copy of $G_2$ (or $G_4$, or $G_8$, respectively) on $H-z$ by identifying $x$ and $y$ with $z_1$ and $z_2$, respectively. Denote the current graph by $F_2$ (or $F_4$, or $F_8$, respectively). If $F_2$ (or $F_4$, or $F_8$, respectively) admits an edge 3-coloring $c$, then one can see that $c(vx)\neq c(wy)$ (or $c(u_1x)\neq c(v_1y)$, or $c(u_2x)\neq c(v_1y)$, respectively). Hence we can construct an edge 3-coloring of $F$ by restricting $c$ to $F-z=F_2-\{u,v,w\}=F_4-\{u_0,u_1,v_0,v_1,w\}=F_8-\{u_0,u_1,u_2,v_0,v_1\}$ and coloring $zz_1=zx$ and $zz_2=zy$ with $c(vx)$ and $c(wy)$ (or $c(u_1x)$ and $c(v_1y)$, or $c(u_2x)$ and $(v_1y)$, respectively). Therefore, if $\chi'(F)=4$ then $\chi'(F_2)=4$ (or $\chi'(F_4)=4$, or $\chi'(F_8)=4$, respectively).
Let $F_t$ be the graph derived from $F$ by applying the second operation in Definition \ref{defn} exactly once. If $F_t$ has an edge 3-coloring $c$, then one can check that $c(u_t x)=c(v_t y)$. Hence we can construct an edge 3-coloring of $F$ by restricting $c$ to $F-z_1z_2=F_t-\{u_0,\ldots,u_t,v_0,\ldots,v_t\}$ and coloring $z_1z_2=xy$ with $c(u_t x)$. Therefore, if $\chi'(F)=4$ then $\chi'(F_t)=4$.

As we can see now, any graph derived from a graph with edge chromatic number four by a sequence of the operations in Definition \ref{defn} still has edge chromatic number four. Since $\chi'(K_5-2e)=4$, $\chi'(G)=4$ for any $G\in \mathcal{P}$.
\end{proof}

\begin{thm}\label{thmB}
Let $\mathcal{O}_3$ be the family of outer-1-planar graphs with maximum degree 3. If $G\in \mathcal{O}_3\setminus \mathcal{P}$, then $\chi'(G)=3$.
\end{thm}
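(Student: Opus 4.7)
The plan is to prove Theorem \ref{thmB} by strong induction on $|V(G)|$, with direct verification for the finitely many small base cases. If $G$ is disconnected or has a cut vertex, note first that every graph in $\mathcal{P}$ is 2-connected (since $K_5-2e$ is 2-connected and the paste operations in Definition \ref{defn} preserve 2-connectedness), so each block $B$ of $G$ either is a single edge or has fewer vertices than $G$ and is not in $\mathcal{P}$; by induction $\chi'(B) = 3$. The block colorings combine at each cut vertex $v$ (which has degree at most 3) by independently permuting the three colors inside each block until the edges incident to $v$ receive distinct colors.

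For the main case where $G$ is 2-connected, I apply Theorem \ref{structure}: $G$ contains one of $G_1, \ldots, G_8$ or $H_t$. For each configuration I choose a local reduction to a smaller outer-1-planar graph $G'$ of maximum degree at most 3. For $G_2, G_4, G_8$ (with $x \neq y$) and $H_t$ (with $x \neq y$), I use the explicit reductions stated in Theorem \ref{structure}(a)--(d); for the remaining configurations $G_1, G_3, G_5, G_6, G_7$ one deletes the short chain of degree-2 vertices inside the configuration (possibly also deleting a chord) to obtain a smaller outer-1-plane graph $G'$ with $\Delta(G') \leq 3$. The crucial observation is that $G' \notin \mathcal{P}$: otherwise the corresponding paste operation in Definition \ref{defn} would exhibit $G$ itself as an element of $\mathcal{P}$, contradicting our hypothesis. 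Hence by induction $\chi'(G') = 3$.

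The heart of the argument is then to extend a proper 3-edge-coloring $c$ of $G'$ to one of $G$. For $G_1, G_3, G_5, G_6, G_7$ and for $G_2, G_4, G_8$, this is a local extension problem at the boundary between $G'$ and the deleted configuration: each deleted subgraph has only two attachment edges of prescribed colors and a small, explicit internal structure that admits a proper 3-edge-coloring for any valid pair of boundary colors, so the extension succeeds by inspection. For $H_t$, one reads the argument in Theorem \ref{thmA} in reverse: pasting $H_t$ onto $G'$ forces the two attachment edges $u_t x$ and $v_t y$ to carry the single color $c(xy)$ of the (possibly added) edge in $G'$, and an alternating 3-edge-coloring of the successive rungs $u_i v_i$ of $H_t$ (swapping two of the three colors between consecutive layers) completes the coloring of $G$. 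The degenerate sub-cases $x = y$ are handled by observing, using the constraint that the identified endpoint still has degree at most 3, that they collapse to very small graphs which can be verified by hand.

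The main obstacle I anticipate is the bookkeeping required to confirm that \emph{every} reduction keeps us outside $\mathcal{P}$: the paste operations in Definition \ref{defn} can interact with the structural decompositions of Theorem \ref{structure} in subtle ways, so one must argue that reversing a reduction truly reconstructs $G$ up to isomorphism rather than merely some graph containing it. A secondary concern is the verification that each deleted configuration admits a proper 3-edge-coloring compatible with any prescribed pair of colors on its attachment edges; this is a finite check for $G_1, \ldots, G_8$ and reduces to an explicit periodic coloring of the ladder $H_t$.
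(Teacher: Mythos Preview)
Your approach is essentially the paper's: a minimal-counterexample (equivalently, induction) argument that reduces via Theorem~\ref{structure} to a smaller graph, applies the induction hypothesis, and extends the 3-edge-coloring back through the deleted configuration by an explicit case analysis. The paper carries out exactly these extensions, writing out concrete color assignments for each of $G_1,\dots,G_8,H_t$.

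There is one small gap in your justification. You write that $G'\notin\mathcal{P}$ because ``the corresponding paste operation in Definition~\ref{defn} would exhibit $G$ itself as an element of $\mathcal{P}$''. That reasoning is valid for the reductions attached to $G_2,G_4,G_8,H_t$, since those are precisely the configurations appearing in Definition~\ref{defn}. But there is no paste operation corresponding to $G_1,G_3,G_5,G_6,G_7$, so this argument does not apply to them. The correct (and easy) reason, which the paper uses implicitly, is that in each of these five cases the reduction deletes vertices (or an edge between two $2$-vertices) in such a way that some remaining vertex drops to degree at most $1$; since every graph in $\mathcal{P}$ has minimum degree~$2$, the reduced graph is automatically outside $\mathcal{P}$. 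Once you make this observation explicit, your proof lines up with the paper's.
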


\begin{proof}
Let $G$ be a minimal counterexample to this statement. One can see that $G$ is 2-connected. By Theorem \ref{structure}, $G$ contains one of the configurations $G_1,G_2,\dots,G_7$ and $H_t$ as in Figure \ref{o1p}.

If $G$ contains $G_1$, then $G-uv$ has an edge 3-coloring $c$ by the minimality of $G$ and $c$ can be extended to $G$ by coloring $uv$ with a color different form $c(ux)$ and $c(uy)$.
If $G$ contains $G_3$, then $G-\{u,v\}$ has an edge 3-coloring $c$. If $d(x)=2$ or $d(y)=2$, then we come back to the case when
$G$ contains $G_1$. Let $x_1$ and $y_1$ be the third neighbor of $x$ and $y$, respectively. If $c(xx_1)=c(yy_1)=1$, then extend $c$ to an edge 3-coloring of $G$ by coloring $ux,vy$ with 2 and $vx,uy$ with 3. If $c(xx_1)=1$ and $c(yy_1)=2$, then extend $c$ to an edge 3-coloring of $G$ by coloring $vy$ with 1, $ux$ with 2 and $vx,uy$ with 3.
If $G$ contains $G_5$, then $G-\{u_0,u_1,v_0,v_1\}$ has an edge 3-coloring $c$. If $c(u_2x)=c(v_2y)=1$, then
extend $c$ to an edge 3-coloring of $G$ by coloring $u_0u_1,v_0v_1$ with 1, $u_1u_2,v_1v_2$ with 2 and $u_0v_2,v_0u_2$ with 3. If $c(u_2x)=1$ and $c(v_2y)=2$, then extend $c$ to an edge 3-coloring of $G$ by coloring $u_0v_2,v_0v_1$ with 1, $u_0v_0$ with 2 and $u_0u_1,v_1v_2,u_2v_0$ with 3.
If $G$ contains $G_6$, then $G-\{u_0,u_1,v_0,w\}$ has an edge 3-coloring $c$. If $c(u_2x)=c(v_1y)=1$, then
extend $c$ to an edge 3-coloring of $G$ by coloring $u_0u_1,wv_0$ with 1, $u_1u_2,u_0w,v_0v_1$ with 2 and $u_0v_1,u_2v_0$ with 3.
If $c(u_2x)=1$ and $c(v_1y)=2$, then extend $c$ to an edge 3-coloring of $G$ by coloring $u_0u_1,v_0v_1$ with 1, $u_0w,u_2v_0$ with 2 and $wv_0,u_1u_2,u_0v_1$ with 3.
If $G$ contains $G_7$, then $G-\{u_0,u_1,v_0,v_1,w\}$ has an edge 3-coloring $c$. If $c(u_2x)=c(v_2y)=1$, then
extend $c$ to an edge 3-coloring of $G$ by coloring $u_0u_1,v_0v_1$ with 1, $u_0w,u_2v_0,v_1v_2$ with 2 and $wv_0,u_0v_2,u_1u_2$ with 3.
If $c(u_2x)=$ and $c(v_2y)=2$, then extend $c$ to an edge 3-coloring of $G$ by coloring $wv_0,u_0v_2$ with 1, $u_0w,v_0v_1,u_1u_2$ with 2 and $u_0u_1,v_1v_2,u_2v_0$ with 3.

If $G$ contains $G_2$ and $x=y$, then $G=K_4-e$ since $G$ is 2-connected and $\chi'(G)=3$. If $G$ contains $G_2$ and $x\neq y$, then delete $u,vw$ and identify $v$ with $w$ as a common vertex $z$. Denote the resulted graph by $M_2$. If $\Delta(M_2)\leq 2$, then $\chi'(M_2)\leq 3$ by Vizing's theorem. If $\Delta(M_2)=3$, then by Theorem \ref{structure}(a) and Definition \ref{defn}, $M_2\in\mathcal{O}_3\setminus \mathcal{P}$, which implies that $\chi'(M_2)=3$ by the minimality of $G$. Let $c$ be an edge 3-coloring of $M_2$. Assume that $c(zx)=1$ and $c(zy)=2$. We construct an edge 3-coloring of $G$ by restricting $c$ to $G-\{u,v,w\}$ and coloring $vx,uw$ with 1, $uv,wy$ with 2 and $vw$ with 3.

If $G$ contains $G_4$ and $x=y$, then $G$ is the graph induced by the vertices of $G_4$ and one can check that $\chi'(G)=3$. If $G$ contains $G_4$ and $x\neq y$, then delete $u_0,v_0,w$ and identify $u_1$ with $v_1$ as a common vertex $z$. Denote the resulted graph by $M_4$. If $\Delta(M_4)\leq 2$, then $\chi'(M_4)\leq 3$. If $\Delta(M_4)=3$, then by Theorem \ref{structure}(b) and Definition \ref{defn}, $M_4\in\mathcal{O}_3\setminus \mathcal{P}$, which implies that $\chi'(M_4)=3$ by the minimality of $G$. Let $c$ be an edge 3-coloring of $M_4$. Assume that $c(zx)=1$ and $c(zy)=2$. We construct an edge 3-coloring of $G$ by restricting $c$ to $G-\{u_0,u_1,v_0,v_1,w\}$ and coloring $u_1x,u_0w,v_0v_1$ with 1, $u_0u_1,wv_0,v_1y$ with 2 and $u_0v_1,u_1v_0$ with 3.

If $G$ contains $G_8$ and $x=y$, then $G$ is the graph induced by the vertices of $G_8$ and one can check that $\chi'(G)=3$. If $G$ contains $G_8$ and $x\neq y$, then delete $u_0,v_1,v_0$ and identify $u_2$ with $v_1$ as a common vertex $z$. Denote the resulted graph by $M_8$. If $\Delta(M_8)\leq 2$, then $\chi'(M_8)\leq 3$. If $\Delta(M_8)=3$, then by Theorem \ref{structure}(c) and Definition \ref{defn}, $M_8\in\mathcal{O}_3\setminus \mathcal{P}$, which implies that $\chi'(M_8)=3$ by the minimality of $G$. Let $c$ be an edge 3-coloring of $M_8$. Assume that $c(zx)=1$ and $c(zy)=2$. We construct an edge 3-coloring of $G$ by restricting $c$ to $G-\{u_0,u_1,u_2,v_0,v_1\}$ and coloring $u_2x,u_0u_1,v_0v_1$ with 1, $u_1u_2,u_0v_0,v_1y$ with 2 and $u_0v_1,u_2v_0$ with 3.

If $G$ contains $H_t$ for some integer $t$, then $x\neq y$, because otherwise $G\in \mathcal{P}$. Moreover, we can assume, without loss of generality, that $xy\not\in E(G)$.
Delete all vertices of $H_t$ except $x$ and $y$ and connect $x$ with $y$ by an edge. By $M_t$ we denote the resulted graph. If $\Delta(M_t)\leq 2$, then $\chi'(M_t)\leq 3$. If $\Delta(M_t)=3$, then by Theorem \ref{structure}(d) and Definition \ref{defn}, $M_t\in\mathcal{O}_3\setminus \mathcal{P}$, which implies that $\chi'(M_t)=3$ by the minimality of $G$. Since the configuration $H_t$ is edge 3-colorable if and only if $u_tx$ and $v_ty$ receive same color, any edge 3-coloring $c$ of $M_t$ can be extended to an edge 3-coloring of $G$ by restricting $c$ to $G-xy$, coloring $u_tx,v_ty$ with $c(xy)$ and filling the colors on the remaining edges of the configuration $H_t$ properly.
\end{proof}

\section{Conclusions}

Combine Theorems \ref{thmA} and \ref{thmB} with Zhang, Liu and Wu' result \cite{ZPOPG} that every outer-1-planar graph with maximum degree $\Delta\geq 4$ has edge chromatic number $\Delta$, we have the following corollary, which completely determine the edge chromatic number of outer 1-planar graphs.

\begin{cor}
If $G$ is an outer-1-planar graph, then
$$\chi'(G)=\left\{
            \begin{array}{ll}
              \Delta(G), & \hbox{if $G\not\in \mathcal{P}$ and $G$ is not an odd cycle;} \\
              \Delta(G)+1, & \hbox{otherwise.}
            \end{array}
          \right.
$$
\end{cor}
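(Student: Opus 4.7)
The plan is a straightforward case split on $\Delta(G)$, because each regime is already handled by a separate result in the paper or in the literature. Before splitting, I would record the following consistency observation: every graph in $\mathcal{P}$ has maximum degree exactly $3$. This is because the seed $K_5-2e$ has $\Delta=3$, and neither operation in Definition \ref{defn} raises the maximum degree: the first replaces a degree-$2$ vertex by a copy of $G_2$, $G_4$, or $G_8$ whose designated attachment vertices $x,y$ contribute at most one new edge each at $z_1,z_2$; the second replaces an edge $z_1z_2$ by a copy of some $H_t$ whose attachment vertices again contribute exactly one edge each at $z_1,z_2$. This observation guarantees that the $\mathcal{P}$-clause in the corollary is only active when $\Delta(G)=3$.

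Next I would handle the three ranges in turn. For $\Delta(G)\geq 4$, the Zhang--Liu--Wu result \cite{ZPOPG} gives $\chi'(G)=\Delta(G)$, and here $G\notin\mathcal{P}$ (by the observation above) and $G$ is not an odd cycle, so the formula asserts $\Delta(G)$, matching. For $\Delta(G)=3$, Theorems \ref{thmA} and \ref{thmB} give precisely the dichotomy: $\chi'(G)=4=\Delta(G)+1$ if $G\in\mathcal{P}$ and $\chi'(G)=3=\Delta(G)$ otherwise; since a cubic graph is not a cycle, the odd-cycle branch is vacuous. For $\Delta(G)\leq 2$, $G$ is a disjoint union of paths and cycles, and by elementary edge coloring we have $\chi'(G)=\Delta(G)$ unless $G$ is an odd cycle, in which case $\chi'(G)=\Delta(G)+1$; again $G\notin\mathcal{P}$, so the formula matches.

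There is essentially no obstacle here, since every heavy lifting has been done earlier; the only thing one has to be careful about is the bookkeeping that $\mathcal{P}$ and the ``odd cycle'' exceptional sets do not overlap with one another and only appear in their respective degree ranges, which is exactly the content of the preliminary observation. Combining the three cases yields the claimed piecewise formula for every outer-$1$-planar $G$.
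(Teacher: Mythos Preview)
Your proposal is correct and follows exactly the paper's approach: the paper gives no formal proof but simply states that the corollary follows by combining Theorems \ref{thmA} and \ref{thmB} with the $\Delta\geq 4$ result of \cite{ZPOPG}, and you have merely made this case split explicit and supplied the bookkeeping that $\mathcal{P}$ lives only in the $\Delta=3$ range (an observation the paper also records just before Theorem \ref{thmA}).
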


On the other hand, since every graph $G\in \mathcal{P}$ has minimum degree 2, we have the following

\begin{cor}
If $G$ is a cubic outer-1-planar graph, then $\chi'(G)=\Delta(G)$.
\end{cor}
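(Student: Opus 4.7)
The plan is to derive this corollary directly from Theorem \ref{thmB} together with the structural observation, stated just above the corollary, that every graph in the class $\mathcal{P}$ has minimum degree exactly $2$. Since a cubic graph is $3$-regular, it has both maximum degree and minimum degree equal to $3$, and hence cannot belong to $\mathcal{P}$.

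More precisely, I would proceed as follows. First, note that if $G$ is cubic and outer-1-planar, then $\Delta(G)=3$, so by definition $G\in\mathcal{O}_3$. Next, since $\delta(G)=3>2$ while every graph in $\mathcal{P}$ has minimum degree $2$ (as recorded in the paragraph following Definition \ref{defn}), we conclude $G\notin\mathcal{P}$. Therefore $G\in\mathcal{O}_3\setminus\mathcal{P}$. Applying Theorem \ref{thmB} directly yields $\chi'(G)=3=\Delta(G)$, as required.

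There is essentially no real obstacle here: all of the work has already been done in Theorem \ref{thmB} and in the observation about the minimum degree of members of $\mathcal{P}$. One could, if desired, double check the minimum-degree claim by inspecting Definition \ref{defn}: the base graph $K_5-2e$ contains two vertices of degree $2$, and each operation either replaces a $2$-vertex by a configuration whose interface vertices become $3$-valent but introduces new $2$-valent vertices inside $G_2$, $G_4$, or $G_8$, or pastes a copy of $H_t$ across an edge, again producing internal $2$-valent vertices. Thus $\delta\equiv 2$ is preserved throughout, confirming that membership in $\mathcal{P}$ is incompatible with being cubic, and the corollary follows.
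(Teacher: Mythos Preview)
Your proposal is correct and follows exactly the paper's own reasoning: the paper justifies this corollary by the single observation that every graph in $\mathcal{P}$ has minimum degree $2$, hence no cubic graph lies in $\mathcal{P}$, and Theorem~\ref{thmB} applies.
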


\noindent\textbf{Remark:} Not every graph in $\mathcal{P}$ is outer-1-planar graph. More precisely, a graph $G\in \mathcal{P}$ is outer-1-planar if and only if $G$ does not contain $K_4^+$ as a minor, where $K_4^+$ is the graph described in Figure \ref{K4+}, and furthermore, whether a graph $G\in \mathcal{P}$ is an outer-1-planar graph or not can be tested in linear time, see \cite{ABBGHNR}. On the other hand, whether an outer-1-planar graph with maximum degree 3 and minimum degree 2 belongs to $\mathcal{P}$ or not can also be decided in linear time by recognizing the configurations $G_2,G_4,G_8$ or $H_t$ in each step.

\begin{figure}

\begin{center}
\includegraphics[width=3cm,height=2.5cm]{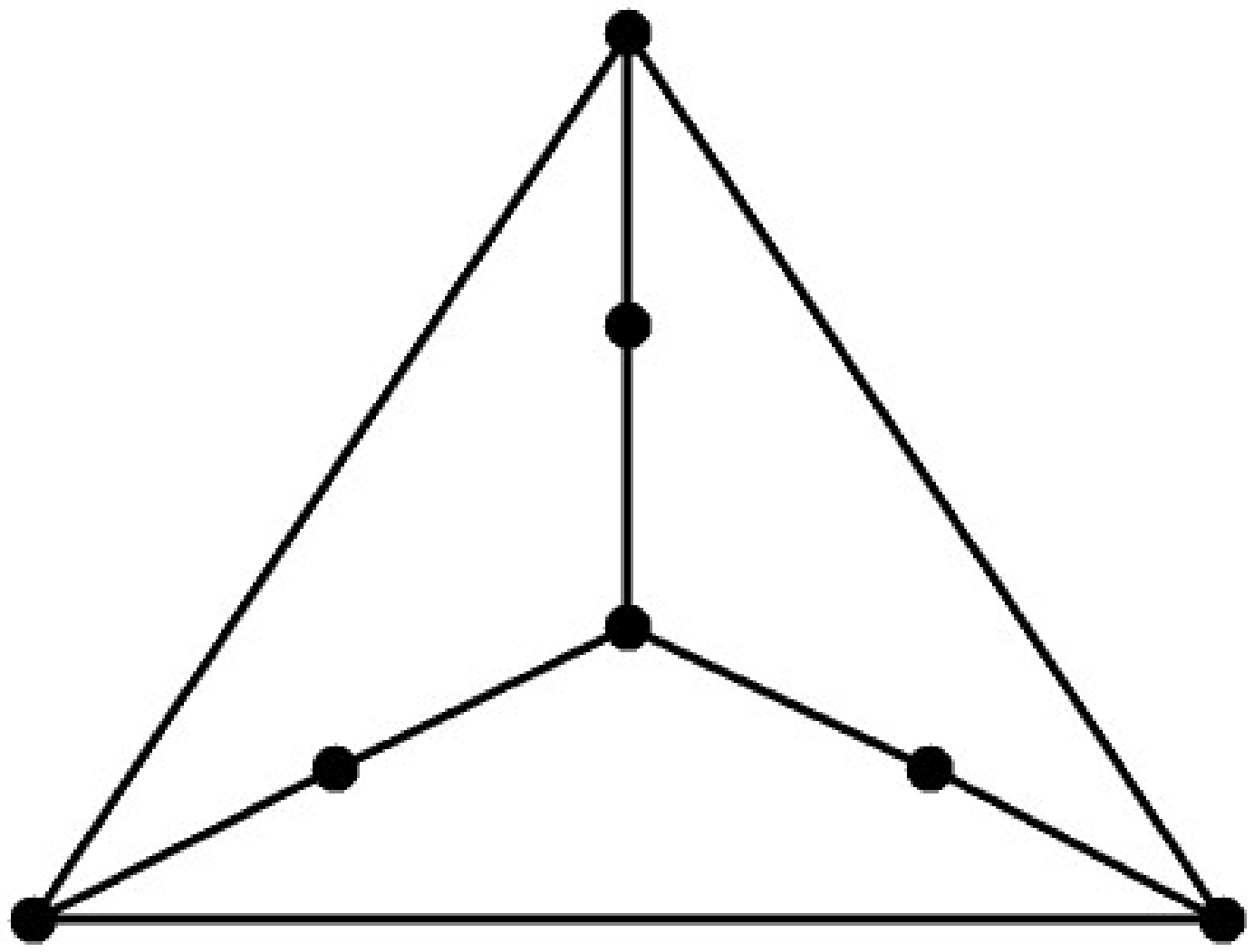}\\
\end{center}

  \caption{The graph $K_4^+$}\label{K4+}
\end{figure}

\end{document}